\newcommand{\monthyear}[1]{%
  \def\@monthyear{\uppercase{#1}}}
\newcommand{\volnumber}[1]{%
  \def\@volnumber{\uppercase{#1}}}
\def\ps@plain{\ps@empty
  \def\@oddfoot{\@monthyear \hfil \thepage}%
  \def\@evenfoot{\thepage \hfil \@volnumber}}
\def\ps@firstpage{\ps@plain}
\def\ps@headings{\ps@empty
  \def\@evenhead{%
    \setTrue{runhead}%
    \def\thanks{\protect\thanks@warning}%
    \uppercase{The Fibonacci Quarterly}\hfil}%
  \def\@oddhead{%
    \setTrue{runhead}%
    \def\thanks{\protect\thanks@warning}%
    \hfill\uppercase{On negaFibonacci-esque Sequences And Their Relation To The Golden Ratio}}%
  \let\@mkboth\markboth
  \def\@evenfoot{%
    \thepage \hfil \@volnumber}%
  \def\@oddfoot{%
    \@monthyear \hfil \thepage}%
  }%
\newcommand{\C}{{\mathbb C}}
\newcommand{\N}{{\mathbb N}}
\newcommand{\iu}{{i\mkern1mu}}
\theoremstyle{plain}
\numberwithin{equation}{section}
\newtheorem{thm}{Theorem}[section]
\newtheorem{theorem}[thm]{Theorem}
\newtheorem{lemma}[thm]{Lemma}
\newtheorem{corollary}[thm]{Corollary}
\newtheorem{definition}[thm]{Definition}
\newtheorem{note}[thm]{Note}
\begin{document}
\monthyear{August 2023}
\volnumber{Volume 61, Number 4}
\setcounter{page}{1}

\title{On negaFibonacci-esque Sequences And Their Relation To The Golden Ratio}
\author{Clemens Schütz}
\address{       Faculty of Mathematics\\
                University of Vienna\\
                Vienna, Vienna\\
                Oskar-Morgenstern-Platz 1, Austria}
\email{schuetzc48@univie.ac.at}
\thanks{We would like to thank Prof. Pitzl-Reinbacher for his support during the early stages of this paper.}
\author{Kristian Kelly}
\address{Faculty of Mathematics\\
        University of Vienna\\
        Vienna, Vienna\\
        Oskar-Morgenstern-Platz 1, Austria}
\email{kristiank22@univie.ac.at}

\begin{abstract}
The Fibonacci sequence, $F_n = F_{n - 1} + F_{n - 2}$, and its counterpart for $n < 0$, the negaFibonacci sequence, $F_{-n} = (-1)^{n + 1} \cdot F_n$, are among the most studied sequences in mathematics.
In this paper we will present a new kind of sequence, the negaFibonacci-esque sequences, identified by the property that $\omega_n = \omega_{n - 2} - \omega_{n - 1}$ with $\omega_1, \omega_2 \in \C$ chosen at will.
We will partition this kind into natural complete and complex complete negaFibonacci-esque sequences.
We will prove that $\frac{1}{\phi^n}$, referred to as the principal negaFibonacci-esque sequence, is not only a complex complete negaFibonacci-esque sequence but also one of the most significant.
Furthermore, we will present an explicit formula for all complex complete negaFibonacci-esque sequences constructed with a combination of two negaFibonacci terms.
We shall then conclude by connecting these sequences to the golden spiral and ratio.
\end{abstract}

\maketitle

\section{Introduction}
The Fibonacci sequence might be one of the most studied sequences in history, and not without reason. From the spirals of seashells to the arrangement of leaves on a stem, the Fibonacci sequence seems to emerge organically in various biological and botanical structures.
Its limit, the golden ratio \parencite{goldenfib}, finds applications in composition \parencite{fibmusic}, Pentagonal symmetry systems \parencite{pentagon}, Kepler's triangle \parencite{keplers} and  was utilized by, among others, the famed architect Le Corbusier in his work. \parencite{architecture}.
It is typically regarded as a sequence, $F_n$, only defined for positive integers. 
Its extension, however, proves to also be interesting. It is called the negaFibonacci sequence and was popularized by Donald Knuth in his famed book \textit{The Art of Computer Programming} and defined as $(-1)^{n + 1} \cdot F_n = F_{-n}$ \parencite{hyperbolic}.
He also showed its importance in his negaFibonacci Coding, an unary numeral system where each positive integer is represented by a sequence of 1s with the length of this sequence being determined by a modified negaFibonacci sequence \parencite{knuth}. 
In this paper we will present a new kind of sequence, negaFibonacci-esque, or nFe, sequences (-esque is an infix standing for 'like'), which look akin to Fibonacci sequences and are defined by $\omega_n = \omega_{n - 2} - \omega_{n - 1}$ with $\omega_1, \omega_2$ being chosen at will.
The connection to the Fibonacci numbers becomes even more apparent once we present a general explicit formula for all nFe-sequences which is constructed using two negaFibonacci terms. 
Furthermore, we will partition all nFe-sequences into so-called natural and complex complete nFe-sequences. We will then prove that the function $\frac{1}{\phi^n}$ is a complex complete nFe-sequence.
We will then use these learnings to find out more about the golden spiral \& ratio. 
\subsection{Definitions}\leavevmode\newline
We shall begin this paper with a couple of definitions that will be required during the proofs of this paper.\\

\begin{definition}
        The term nFe-sequence shall be an abbreviation for negaFibonacci-esque sequence.
\end{definition}

We shall define the golden ratio, $\phi$, for completeness sake.
\begin{definition}
        $\phi = \frac{1 + \sqrt{5}}{2} \approx 1.618033988749894\dots$
\end{definition}
Should one ever have time on their hands, it proves to be an interesting task to try and find identities involving this number such as $\phi^2 = \phi + 1, \frac{1}{\phi} = \phi - 1$.
Some of these identities will be used during this paper. We will not be proving them and, should one be interested, recommend them as exercises for the reader.

\begin{definition}
        The Fibonacci sequence is a sequence given by $F_1 = 1$, $F_2 = 1$, $F_n = F_{n - 2} + F_{n - 1}$.
\end{definition}

\begin{definition}
        The negaFibonacci numbers are a sequence given by $F_{-n} = (-1)^{n + 1} \cdot F_n$
\end{definition}

\begin{definition}
        The Fibonacci sequence can be given using an explicit formula:  \[\Theta: \C \to \C\] \[\Theta(n) = \frac{\phi^n - (-\phi)^{-n}}{\sqrt{5}} = F_n, \forall n \in \C\]
\end{definition}
This formula is based on Binet's Formula \parencite{binetform} and will not be proven within the scope of this paper. However, the proof may be found in \parencite{binetproof}.
Using this continuous extension of the Fibonacci numbers we are able to get Fibonacci numbers for even imaginary n.

\begin{definition}\label{extension}
        \[I(n): \C \to \C\] 
        \[I(n) = (-1)^n\]
        For the extension of the domain of $(-1)^n$ to the complex numbers, we will use the following definition:
        \[(-1)^n = \exp(\iu \cdot \pi \cdot n)\]
        Which is well-defined. 
\end{definition}
This function shall merely be referred to as the identity function. 
This name is generally ambiguous. Nevertheless, in the scope of this paper only this definition is used.

\begin{note}
        \cref{extension} is also used, when rasing any negative number to a complex power. For example:
        \[(-\phi)^n = (-1)^n \cdot \phi^n = \exp(\iu \cdot \pi \cdot n) \cdot \phi^n\]
\end{note}

\section{negaFibonacci-esque Sequences}
\begin{definition}\label{sequence}
        A sequence $\omega_n$ shall be called a natural complete negaFibonacci-esque, or nFe, if, for given $\omega_1$ and $\omega_2$:
        \[\omega_n = \omega_{n - 2} - \omega_{n - 1}\]
        \[\forall n \in \N\]
\end{definition}
As $\omega_n$ is only defined for $n \in \N$, our property, $\omega_n = \omega_{n - 2} - \omega_{n - 1}$, must only be fulfilled for $n \in \N$.
For this reason, we shall now define a kind of function, for which this property holds for all $n \in \C$:
\begin{definition}\label{comseq}
        Let:
        \[\tilde{\omega}: \C \to \C\]
        $\tilde{\omega}(n)$ shall be called a complex complete nFe-sequence if:
        \[\forall n \in \C: \tilde{\omega}(n) = \tilde{\omega}(n - 2) - \tilde{\omega}(n - 1)\]
        The natural complete nFe-sequence, $\omega_n$, such that:
        \[\forall n \in \N: \tilde{\omega}(n) = \omega_n\]
        shall be called the corresponding natural complete nFe-sequence.
\end{definition}

\begin{lemma}
        There is an uncountable infinite amount of natural complete nFe-sequences and complex complete sequences.
\end{lemma}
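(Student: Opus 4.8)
The plan is to establish the two assertions separately, in each case exhibiting an injection from an uncountable set into the relevant family of sequences.

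First I would treat the natural complete nFe-sequences. By \cref{sequence}, such a sequence is completely determined by its two initial values $\omega_1, \omega_2 \in \C$, since every later term is forced by $\omega_n = \omega_{n-2} - \omega_{n-1}$; conversely, any choice of $\omega_1, \omega_2$ produces a valid sequence. Hence the assignment $(\omega_1,\omega_2) \mapsto (\omega_n)_{n \in \N}$ is a bijection onto the set of natural complete nFe-sequences, with inverse simply reading off the first two terms. To make the uncountability transparent I would even restrict to the subfamily with $\omega_2 = 0$ and $\omega_1 \in \R$: distinct values of $\omega_1$ give sequences already differing in their first term, so this alone produces uncountably many such sequences.

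For the complex complete case (\cref{comseq}) the strategy is to exhibit a single nonzero example and then scale it. I would search for solutions of the form $\tilde{\omega}(n) = r^n$; substituting into $\tilde{\omega}(n) = \tilde{\omega}(n-2) - \tilde{\omega}(n-1)$ and dividing by $r^{n-2}$ yields the characteristic equation $r^2 + r - 1 = 0$, whose positive root is $r = \tfrac{-1+\sqrt5}{2} = \tfrac1\phi$. The key point is that $\psi(n) := \left(\tfrac1\phi\right)^n = \exp(-n\ln\phi)$ is defined for all $n \in \C$ and satisfies the recurrence everywhere, which reduces to the golden-ratio identity $\left(\tfrac1\phi\right)^2 = 1 - \tfrac1\phi$ (equivalently $\tfrac1\phi = \phi - 1$). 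Once $\psi$ is in hand, I would observe that the recurrence is linear and homogeneous, so $c\psi$ is a complex complete nFe-sequence for every $c \in \C$. Since $\psi(0) = 1 \neq 0$, the equality $c\psi = c'\psi$ forces $c = c\psi(0) = c'\psi(0) = c'$, so the map $c \mapsto c\psi$ is injective; as $\C$ is uncountable, this yields uncountably many complex complete nFe-sequences and finishes the lemma.

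The step I expect to be the main obstacle is verifying that $\psi(n) = \left(\tfrac1\phi\right)^n$ obeys the defining recurrence for \emph{all} complex $n$, not merely for integers. The algebra is routine once one writes $\psi(n-2) - \psi(n-1) = \left(\tfrac1\phi\right)^{n-2}\bigl(1 - \tfrac1\phi\bigr) = \left(\tfrac1\phi\right)^{n-2}\cdot\left(\tfrac1\phi\right)^2 = \psi(n)$, but it hinges on two things: that the base $\tfrac1\phi$ is a positive real, so the complex exponential introduces no branch ambiguity (unlike the situation in \cref{extension} for negative bases), and on the identity $\left(\tfrac1\phi\right)^2 = 1 - \tfrac1\phi$. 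Everything else is bookkeeping with initial conditions and cardinality.
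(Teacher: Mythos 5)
Your proposal is correct, and the first half (initial values $(\omega_1,\omega_2)$ determine a natural complete nFe-sequence, giving a bijection with $\C\times\C$) is exactly the paper's argument. The second half takes a genuinely different route. The paper gets its lower bound on $card(\tilde{W})$ by forward-referencing \cref{implication}, i.e.\ by invoking the two-parameter family $\Omega_{\eta,\gamma}$ whose completeness is only established later in \cref{comcomplete}; it then pairs this with an upper bound via the set of all functions $\C\to\C$ (a step that is actually false as written, since $card(\mathscr{F}_{\C,\C}) = 2^{card(\C)} > card(\C)$, though the lemma's uncountability claim survives). You instead build a single explicit solution $\psi(n) = (1/\phi)^n$ from the characteristic equation $r^2 + r - 1 = 0$, verify the recurrence for all complex $n$ via $1 - 1/\phi = 1/\phi^2$ (choosing the positive root so that $\exp(-n\ln\phi)$ has no branch ambiguity), and then use linearity and homogeneity of the recurrence to get the injective one-parameter family $c\mapsto c\psi$, with injectivity certified by $\psi(0)=1\neq 0$. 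This buys you a self-contained proof with no forward dependency on Section 3, at the cost of producing only a one-parameter (rather than two-parameter) family and not pinning down the exact cardinality $card(\tilde{W}) = card(\C)$ -- but the lemma only asserts uncountability, so nothing is lost. Incidentally, your $\psi$ is, up to the index shift, precisely the paper's principal nFe-sequence of \cref{principal}, so your construction anticipates \cref{omegaeqphi}.
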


\begin{proof}
        Both proofs are trivial in ZFC (as $card(A \times A) = card(A)$, where $A$ is uncountably infinite, implies the axiom of choice) and only here for the sake of completeness.\\
        Let $W$ be the set of all natural complete nFe-sequences. Naturally, a sequence is determined only by its initial values, $\omega_1, \omega_2$. As both of these values are real:
        \begin{equation}
                card(W) = card(\C \times \C) = card(\C)
        \end{equation}
        Let $\tilde{W}$ be the set of all complex complete nFe-sequences. We shall start with a lower bound: As per \cref{implication}, there is a complex complete nFe-sequence for every pair of indices $(\eta,\gamma)$:
        \begin{equation}
                card(\tilde{W}) \geq card(\C \times \C) = card(\C)
        \end{equation}
        We can quickly get an upper bound as well. Let $\mathscr{F}_{\C,\C}$ denote the set of all functions, $f: \C \to \C$. Naturally:
        \begin{equation}
                card(\tilde{W}) \leq card(\mathscr{F}_{\C, \C}) = card(\C)
        \end{equation}
        Thus:
        \begin{equation}
                card(\C) \leq card(\tilde{W}) \leq card(\C)
        \end{equation}
        Thus, we have successfully proven that $card(\tilde{W}) = card(\C)$.
\end{proof}

It might be worth noting that this implies that the cardinality of all natural complete nFe-sequences is equal to the cardinality of all complex complete nFe-sequences. 

\section{An Explicit Formulation}

For the purpose of creating an explicit formula for all such sequences, we shall have two goals in mind.
First of all, the explicit formula has to be a complex complete nFe-sequence for all possible combinations of initial values, $\Omega_{\eta,\gamma}(1)$ and $\Omega_{\eta,\gamma}(2)$.
Secondly, we want to be able to, given a natural complete nFe-sequence, $\omega_n$, find a complex complete sequence, $\Omega_{\eta,\gamma}$, for which $\omega_n$ is its corresponding natural complete sequence.
We shall accomplish our goals by identifying each complex complete nFe-sequence with two indices, $\eta$ and $\gamma$.
\begin{definition}
        Let:         \[\Omega_{\eta,\gamma}: \C \to \C\]
        \[\Omega_{\eta,\gamma}(n) = \eta \cdot I(n - 1) \cdot F_n + \gamma \cdot I(n) \cdot F_{n - 1} \cdot \phi\] where: \[I(n) = (-1)^n\] \[\forall n \in \C\]
        For purposes of continuity:
        \[\Omega_{\eta,\gamma}(n) = \eta \cdot I(n - 1) \cdot \Theta(n) + \gamma \cdot I(n) \cdot \Theta(n - 1) \cdot \phi\]
        Where:
        \[\Theta(n) = \frac{\phi^n - (-\phi)^{-n}}{\sqrt{5}}\]
\end{definition}

\begin{note}
        Note that the omega function is equivalent to: \[\Omega_{\eta,\gamma}(n) = \eta \cdot F_{-n} + \gamma \cdot F_{-n + 1} \cdot \phi\]
\end{note}

As mentioned before, we are indexing $\Omega$ at two values, $\eta, \gamma$. What is still left to do is to define how, given a natural complete nFe-sequence, $\omega_n$, these values have to be chosen such that $\Omega_{\eta,\gamma}(1) = \omega_1$ and $\Omega_{\eta,\gamma}(2) = \omega_2$.
\begin{note}\label{coeffs}
        $\Omega_{\eta, \gamma}(n)$ shall be indexed at two positions, the so-called nFe-coefficients.\\
        If we are given two initial values, $\omega_1, \omega_2$, where we want \[\Omega_{\eta,\gamma}(1) = \omega_1\] and \[\Omega_{\eta,\gamma}(2) = \omega_2\] \\
        nFe-$\eta$ shall be given by \[\eta = \omega_1\]
        nFe-$\gamma$ shall be given by \[\gamma = \frac{\omega_2 + \omega_1}{\phi}\]
\end{note}

\begin{theorem}\label{comcomplete}
        $\Omega_{\eta, \gamma}(n)$ is a complex complete nFe-sequence $\forall \eta, \gamma \in \C$.
\end{theorem}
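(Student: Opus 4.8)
The plan is to use the linearity of the defining nFe relation together with a single recurrence for the extended Fibonacci function $\Theta$. Write $\Omega_{\eta,\gamma}(n) = \eta\, a(n) + \gamma\phi\, b(n)$, where $a(n) = I(n-1)\Theta(n)$ and $b(n) = I(n)\Theta(n-1)$. Since the relation $\tilde\omega(n) = \tilde\omega(n-2) - \tilde\omega(n-1)$ of \cref{comseq} is linear and homogeneous, and since $\eta$ and $\gamma\phi$ are constants independent of $n$, it suffices to prove that each of $a$ and $b$ is individually a complex complete nFe-sequence; the claim for $\Omega_{\eta,\gamma}$ then follows by taking the corresponding linear combination.

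The key ingredient is that $\Theta$ obeys the ordinary Fibonacci recurrence at every complex argument, i.e. $\Theta(m) = \Theta(m-1) + \Theta(m-2)$ for all $m \in \C$. I would prove this directly from the closed form $\Theta(m) = (\phi^m - (-\phi)^{-m})/\sqrt5$, using only the algebraic identities $\phi^2 = \phi + 1$ (which gives $\phi^{m-1} + \phi^{m-2} = \phi^{m-2}(\phi+1) = \phi^m$) and $(-\phi) + (-\phi)^2 = -\phi + \phi^2 = 1$ for the second summand; no integrality of $m$ is needed, since these are identities between entire exponential functions. Rearranged, the recurrence reads $\Theta(m-2) = \Theta(m) - \Theta(m-1)$, so the backward Fibonacci step is precisely the nFe step.

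With this in hand, verifying the recurrence for $a$ and $b$ reduces to bookkeeping with the factors $I$. The one point requiring care is that $I(n) = \exp(\iu\pi n)$ must be treated as a genuine exponential, so that $I(n-2) = I(n)$ and $I(n-1) = -I(n)$ hold as exact identities for all complex $n$ (they come from $\exp(\mp 2\pi\iu) = 1$ and $\exp(-\iu\pi) = -1$, with no branch ambiguity); this is exactly what \cref{extension} supplies. Granting these, one computes $a(n-2) - a(n-1) = I(n-1)\Theta(n-2) + I(n-1)\Theta(n-1) = I(n-1)\Theta(n) = a(n)$, and the identical calculation one shift down gives $b(n-2) - b(n-1) = I(n)\Theta(n-3) + I(n)\Theta(n-2) = I(n)\Theta(n-1) = b(n)$. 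I expect this handling of the $I$-factors to be the only real obstacle: the mathematical content is the complex-argument Fibonacci recurrence for $\Theta$, while the signs $(-1)^n$ simply must be propagated as true exponentials rather than as a formal parity.
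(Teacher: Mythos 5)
Your proof is correct, but it takes a genuinely different route from the paper's. The paper proves the theorem by brute force: it writes out $\Omega_{\eta,\gamma}(n) = \Omega_{\eta,\gamma}(n-2) - \Omega_{\eta,\gamma}(n-1)$ in full via the $\Theta$- and $I$-expansions, then multiplies by $\sqrt{5}$, divides by $(-1)^n$, multiplies by $\phi^n$, and grinds the whole display down to $0 = 0$ in one long chain of simplifications. You instead exploit the linearity of the nFe relation to reduce the theorem to two one-line verifications for the basis functions $a(n) = I(n-1)\Theta(n)$ and $b(n) = I(n)\Theta(n-1)$, and you isolate the actual mathematical content as a standalone lemma: $\Theta(m) = \Theta(m-1) + \Theta(m-2)$ for all $m \in \C$, which follows from $\phi^{m-2}(\phi+1) = \phi^m$ and $(-\phi)^{-m}\bigl((-\phi) + (-\phi)^2\bigr) = (-\phi)^{-m}$ under the convention of \cref{extension}. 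Both arguments are valid; yours is shorter, makes the role of the identities $\phi^2 = \phi + 1$ and $\phi^2 - \phi = 1$ transparent rather than burying them in cancellation, and produces a reusable lemma (the complex-argument Fibonacci recurrence for $\Theta$) that would also streamline the proof of \cref{omegaeqphi}. The paper's computation has the minor virtue of being entirely self-contained and of not requiring the reader to accept the linearity reduction, but at the cost of considerable length. Your care with the $I$-factors ($I(n-2) = I(n)$, $I(n-1) = -I(n)$ as exact exponential identities for complex $n$) is exactly the point where a sloppier version of your argument could go wrong, and you handle it correctly.
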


\begin{proof}
        It stands to prove that, $\forall \eta, \gamma \in \C$:
        \begin{equation}
                \Omega_{\eta,\gamma}(n) = \Omega_{\eta,\gamma}(n - 2) - \Omega_{\eta,\gamma}(n - 1)
        \end{equation}
        We start by expanding the equation:
        \begin{dmath}
                \eta \cdot (-1)^{n - 1} \cdot \frac{\phi^n - (-\phi)^{-n}}{\sqrt{5}} + \gamma \cdot (-1)^n \cdot \frac{\phi^{n - 1} - (-\phi)^{-n + 1}}{\sqrt{5}} \cdot \phi = \eta \cdot (-1)^{n - 1} \cdot \frac{\phi^{n - 2} - (-\phi)^{-n + 2}}{\sqrt{5}} + \gamma \cdot (-1)^n \cdot \frac{\phi^{n - 3} - (-\phi)^{-n + 3}}{\sqrt{5}} \cdot \phi - \left[ \eta \cdot (-1)^{n} \cdot \frac{\phi^{n - 1} - (-\phi)^{-n + 1}}{\sqrt{5}} + \gamma \cdot (-1)^{n - 1} \cdot \frac{\phi^{n - 2} - (-\phi)^{-n + 2}}{\sqrt{5}} \cdot \phi \right]
        \end{dmath}
        We simplify, multiply with $\sqrt{5}$, divide by $(-1)^n$ and simplify once again:
        \begin{dmath}
        \eta \cdot (-\phi^n + (-\phi)^{-n}) + \gamma \cdot (\phi^n - (-\phi)^{-n + 1} \cdot \phi) = \eta \cdot (-\phi^{n - 2} + (-\phi)^{-n + 2}) + \gamma \cdot (\phi^{n - 2} - (-\phi)^{-n + 3} \cdot \phi) + \eta \cdot (-\phi^{n - 1} + (-\phi)^{-n + 1}) + \gamma \cdot (\phi^{n - 1} - (-\phi)^{-n + 2} \cdot \phi)
        \end{dmath}
        We continue by splitting up the negative bases, $(-\phi)^n = (-1)^n \cdot \phi^n$, continue to simplify and combine parenthesis with the same factor:
        \begin{dmath}
                    \eta \cdot (-\phi^n + (-1)^{-n} \cdot \phi^{-n}) + \gamma \cdot (\phi^n - (-1)^{-n + 1} \cdot \phi^{-n + 2}) = \eta \cdot (-\phi^{n - 2} + (-1)^{-n + 2} \cdot \phi^{-n + 2} -\phi^{n - 1} + (-1)^{-n + 1} \cdot \phi^{-n + 1}) + \gamma \cdot (\phi^{n - 2} - (-1)^{-n + 3} \cdot \phi^{-n + 4} + \phi^{n - 1} - (-1)^{-n + 2} \cdot \phi^{-n + 3})
        \end{dmath}
        We now multiply with $\phi^n$:
        \begin{dmath}
                \eta \cdot (-\phi^{2n} + (-1)^{n}) + \gamma \cdot (\phi^{2n} - (-1)^{n + 1} \cdot \phi^2) = \eta \cdot (-\phi^{2n - 2} + (-1)^{n} \cdot \phi^2 - \phi^{2n - 1} + (-1)^{n + 1} \cdot \phi) + \gamma \cdot (\phi^{2n - 2} - (-1)^{n + 1} \cdot \phi^{4} + \phi^{2n - 1} - (-1)^n \cdot \phi^{3})
        \end{dmath}
        We simplify the left hand side of the equation:
        \begin{dmath}
                \gamma \cdot \phi^{2n} - \eta \cdot \phi^{2n} + ((\phi + 1) \cdot \gamma + \eta) \cdot (-1)^n = \eta \cdot (-\phi^{2n - 2} + (-1)^{n} \cdot \phi^2 - \phi^{2n - 1} + (-1)^{n + 1} \cdot \phi) + \gamma \cdot (\phi^{2n - 2} - (-1)^{n + 1} \cdot \phi^{4} + \phi^{2n - 1} - (-1)^n \cdot \phi^{3})
        \end{dmath}
        We continue to apply the same process of simplification to both parenthesis on the right hand side:
        \begin{dmath}
                \gamma \cdot \phi^{2n} - \eta \cdot \phi^{2n} + ((\phi + 1) \cdot \gamma + \eta) \cdot (-1)^n = \eta \cdot (-1)^n - \eta \cdot \phi^{2n} + \gamma \cdot \phi^{2n} - \gamma \cdot \phi^2 \cdot (-1)^n
        \end{dmath}
        Cancelling out the terms leaves us with:
        \begin{equation}
                0 = 0
        \end{equation}
\end{proof}

\begin{theorem}\label{implication}
        There at least one complex complete nFe-sequence for every natural complete nFe-sequence $\omega_n$.  \\Furthermore, there exists exactly one corresponding natural complete nFe-Sequence, as defined in \cref{comseq}, for every complex complete nFe-sequence.
\end{theorem}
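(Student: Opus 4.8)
The plan is to split the statement into its two halves and reduce each to facts already in hand, so that the only genuine computation is the evaluation of $\Omega_{\eta,\gamma}$ at its first two arguments.

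For the first half I would start from an arbitrary natural complete nFe-sequence $\omega_n$, which by \cref{sequence} is completely determined by its pair of initial values $\omega_1,\omega_2$. Following \cref{coeffs} I would set $\eta=\omega_1$ and $\gamma=\frac{\omega_2+\omega_1}{\phi}$ and take the candidate $\Omega_{\eta,\gamma}$. By \cref{comcomplete} this candidate is already known to be a complex complete nFe-sequence, so the only thing left to verify is that it restricts to $\omega_n$ on $\N$. I would do this by first checking the two base cases directly: evaluating the closed form gives $\Omega_{\eta,\gamma}(1)=\eta\cdot F_1=\eta=\omega_1$ (the $\gamma$-term vanishing because $F_0=0$) and $\Omega_{\eta,\gamma}(2)=-\eta+\gamma\phi=-\omega_1+(\omega_2+\omega_1)=\omega_2$. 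Since $\Omega_{\eta,\gamma}$ satisfies the nFe-recurrence for all $n\in\C$, and in particular on $\N$, while $\omega_n$ satisfies the same recurrence on $\N$ with the same first two values, a routine induction on $n$ forces $\Omega_{\eta,\gamma}(n)=\omega_n$ for every $n\in\N$. By \cref{comseq} this says precisely that $\omega_n$ is the natural complete sequence corresponding to $\Omega_{\eta,\gamma}$, establishing existence of at least one complex complete sequence for each natural complete one.

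For the second half I would observe that the correspondence in \cref{comseq} is essentially the restriction map to $\N$, which makes both existence and uniqueness immediate. Given a complex complete nFe-sequence $\tilde{\omega}$, I would define $\omega_n:=\tilde{\omega}(n)$ for $n\in\N$; restricting the identity $\tilde{\omega}(n)=\tilde{\omega}(n-2)-\tilde{\omega}(n-1)$ to $n\ge 3$ shows $\omega_n=\omega_{n-2}-\omega_{n-1}$, so $\omega_n$ is a genuine natural complete nFe-sequence and corresponds to $\tilde{\omega}$ by construction. For uniqueness I would note that any corresponding natural complete sequence $\omega_n'$ must by definition satisfy $\omega_n'=\tilde{\omega}(n)=\omega_n$ for all $n\in\N$, so it coincides with $\omega_n$.

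I do not expect a serious obstacle: the entire content of the theorem lies in the base-case evaluation of $\Omega_{\eta,\gamma}(1)$ and $\Omega_{\eta,\gamma}(2)$, where one must be careful to use $F_0=0$ (so the $\gamma$-contribution disappears at $n=1$) and to track the signs coming from $I(n-1)=(-1)^{n-1}$ and $I(n)=(-1)^n$. Once these two values are confirmed to match $\omega_1$ and $\omega_2$, the first half follows from \cref{comcomplete} together with the uniqueness of a second-order recurrence with prescribed initial data, and the second half follows from the fact that the corresponding natural complete sequence is nothing more than the restriction of $\tilde{\omega}$ to $\N$.
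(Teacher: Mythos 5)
Your proposal is correct and follows essentially the same route as the paper: choose $\eta=\omega_1$, $\gamma=\frac{\omega_1+\omega_2}{\phi}$ per \cref{coeffs}, verify the two base cases $\Omega_{\eta,\gamma}(1)=\omega_1$ and $\Omega_{\eta,\gamma}(2)=\omega_2$ directly, invoke \cref{comcomplete} plus the determinacy of a second-order recurrence to conclude agreement on $\N$, and obtain the second half by restriction to $\N$. The only (favorable) differences are that you make the induction explicit and argue the second half for an arbitrary complex complete sequence $\tilde{\omega}$ rather than only for those of the form $\Omega_{\eta,\gamma}$, which matches the theorem's statement more faithfully.
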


\begin{proof}
        We start by proving the first part of the theorem, that there exists at least one complex complete nFe-sequence for every natural complete nFe-sequence.
        Let $\omega_1, \omega_2$ denote the initial values of $\omega_n$. We now calculate our indices according to \cref{coeffs}:
        \begin{equation}
                \tilde{\eta} = \omega_1
        \end{equation}
        and
        \begin{equation}
                \tilde{\gamma} = \frac{\omega_1 + \omega_2}{\phi}
        \end{equation}
        Let us prove that $\Omega_{\tilde{\eta}, \tilde{\gamma}}(1) = \omega_1$ and $\Omega_{\tilde{\eta}, \tilde{\gamma}}(2) = \omega_2$\\
        For $n = 1$ we quickly arrive at:
        \begin{dmath}
                \tilde{\eta} \cdot (-1)^0 \cdot F_1 +  \tilde{\gamma} \cdot (-1)^1 \cdot F_0 \cdot \phi = \tilde{\eta} + 0 \cdot  \tilde{\gamma} = \tilde{\eta} = \omega_1
        \end{dmath}
        $n = 2$:
        \begin{dmath}
                \tilde{\eta} \cdot (-1)^1 \cdot F_2 +  \tilde{\gamma} \cdot (-1)^2 \cdot F_1 \cdot \phi = -\tilde{\eta} +  \tilde{\gamma} \cdot \phi = -\tilde{\eta} + \frac{\omega_1 + \omega_2}{\phi} \cdot \phi = -\tilde{\eta} + \omega_1 + \omega_2 = -\omega_1 + \omega_1 + \omega_2 = \omega_2
        \end{dmath}
        As $\tilde{\eta},  \tilde{\gamma} \in \C$, according to \cref{comcomplete}, $\Omega_{\tilde{\eta}, \tilde{\gamma}}(n)$ is complex complete, thus:
        \begin{equation}
                \Omega_{\tilde{\eta}, \tilde{\gamma}}(n) = \Omega_{\tilde{\eta}, \tilde{\gamma}}(n - 2) - \Omega_{\tilde{\eta}, \tilde{\gamma}}(n - 1)
        \end{equation}
        for all values $n \in \C$ and thus also for all $n \in \N$. Thus:
        \begin{equation}
                \Omega_{\tilde{\eta}, \tilde{\gamma}}(n) = \omega_n, \forall n \in \N
        \end{equation}
        We continue with the second part of the statement. Given any $\Omega_{\eta,\gamma}(n)$, if we restrict $n$ to the natural numbers, we get a sequence, $\omega_n$, which, naturally, has to obey $\omega_n = \omega_{n - 2} - \omega_{n - 1}$. Thus, it is a natural complete nFe-sequence. This is trivially the only possible corresponding natural complete nFe-sequence of $\Omega_{\eta,\gamma}$.\\
\end{proof}

\section{The Principal negaFibonacci-esque sequence}

\begin{definition}\label{principal}
        $\frac{1}{\phi^n}$ shall be called the principal nFe-sequence.\\
        However, for indexing's sake (any fraction is 0-indexed yet sequences are 1-indexed), we shall let:
        \[\Phi(n) = \frac{1}{\phi^{n - 1}}\] be the delegate of the sequence, $\Phi_n$, with initial values:
        \[\omega_1 = 1\]
        \[\omega_2 = \phi - 1\]
        and is identified by $\eta = \gamma = 1$:
        \[\Omega_{1,1}(n)\]
\end{definition}

This principal nFe-sequence allows us to write all powers of $\phi$ in terms of negaFibonacci numbers, akin to Binet's formula, as proven in \cref{binnet}.

\newcolumntype{Y}{>{\centering\arraybackslash}X}
\setlength{\extrarowheight}{3pt}
\begin{center}
        \centering
        \begin{tabularx}{\textwidth}{|Y||Y|Y|}
                        \hline
                        n-Value & Power of $\phi$ & negaFibonacci-form \\
                        \hline
                        $n = -8$ & $\phi^9$ & $34\phi + 21$ \\
                        \hline
                        $n = -7$ & $\phi^8$ & $21\phi + 13$ \\
                        \hline
                        $n = -6$ &$\phi^7$ & $13\phi + 8$ \\
                        \hline
                        $n = -5$ &$\phi^6$ & $8\phi + 5$ \\
                        \hline
                        $n = -4$&  $\phi^5$ & $5\phi + 3$ \\
                        \hline
                        $n = -3$ & $\phi^4$ & $3\phi + 2$ \\
                        \hline
                        $n = -2$ & $\phi^3$ & $2\phi + 1$ \\
                        \hline
                        $n = -1$ & $\phi^2$ & $\phi + 1$ \\
                        \hline
                        $n = 0$ & $\phi^1$ & $\phi$ \\
                        \hline
                        $n = 1$ & $\phi^{0}$ & $1$ \\
                        \hline
                        $n = 2$ & $\phi^{-1}$ & $-1 + \phi$ \\
                        \hline
                        $n = 3$ & $\phi^{-2}$ & $2 - \phi$ \\
                        \hline
                        $n = 4$ &$\phi^{-3}$ & $-3 + 2\phi$ \\
                        \hline
                        $n = 5$ &$\phi^{-4}$ & $5 - 3\phi$ \\
                        \hline
                        $n = 6$ &$\phi^{-5}$ & $-8 + 5\phi$ \\
                        \hline
                        $n = 7$ &$\phi^{-6}$ & $13 - 8\phi$ \\
                        \hline
                        $n = 8$ &$\phi^{-7}$ & $-21 + 13\phi$ \\
                \hline
        \end{tabularx}
        \centering
\end{center}

To avoid repetition, we shall start by proving that $\Phi(n)$ is in fact equivalent to $\Omega_{1,1}(n), \forall n \in \C$, thus being a complex complete nFe-sequence. Then  we will use this theorem to prove that it is its own corresponding natural complete nFe-sequence.

\begin{theorem}\label{omegaeqphi}
        The principal nFe-sequence is a complex complete nFe-sequence and equivalent to the $\Omega_{1,1}(n)$ sequence. \[\Omega_{1,1}(n) = \Phi(n), \forall n \in \C\]
\end{theorem}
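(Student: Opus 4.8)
The plan is to substitute $\eta = \gamma = 1$ directly into the continuous closed form
\[
\Omega_{\eta,\gamma}(n) = \eta\cdot I(n-1)\cdot\Theta(n) + \gamma\cdot I(n)\cdot\Theta(n-1)\cdot\phi
\]
and to simplify until the expression collapses to $\phi^{1-n} = \Phi(n)$. The conceptual content of the argument is that the two summands are built precisely so that their oscillatory parts annihilate one another, leaving a single honest power of $\phi$; everything else is controlled bookkeeping.

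Concretely, I would first expand $\Theta(n)$ and $\Theta(n-1)$ with the defining Binet-type formula, treating $(-\phi)^{-n}$ strictly through \cref{extension} as $\exp(-\iu\pi n)\,\phi^{-n}$ and \emph{not} silently replacing it by $(-1)^n\phi^{-n}$; for non-integer $n$ these disagree, and keeping them apart is exactly what makes the cancellation legitimate on all of $\C$ rather than merely on $\Z$. The key bookkeeping identity is $\exp(\iu\pi a)\exp(\iu\pi b)=\exp(\iu\pi(a+b))$, which yields the clean reductions $I(n-1)\cdot(-\phi)^{-n} = -\phi^{-n}$ and $I(n)\cdot(-\phi)^{-(n-1)} = -\phi^{1-n}$. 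Distributing $I(n-1)=-\exp(\iu\pi n)$ across $\Theta(n)$ and $I(n)=\exp(\iu\pi n)$ across $\Theta(n-1)$, the first summand contributes $\tfrac{1}{\sqrt5}\bigl(-\exp(\iu\pi n)\phi^n + \phi^{-n}\bigr)$ and the second contributes $\tfrac{1}{\sqrt5}\bigl(\exp(\iu\pi n)\phi^n + \phi^{2-n}\bigr)$.

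Adding these, the two $\pm\exp(\iu\pi n)\phi^n$ terms cancel and I am left with
\[
\Omega_{1,1}(n) = \frac{\phi^{-n} + \phi^{2-n}}{\sqrt5} = \frac{\phi^{-n}\,(1+\phi^2)}{\sqrt5}.
\]
The final step is the golden-ratio identity $1+\phi^2 = \sqrt5\,\phi$ (which follows from $\phi^2=\phi+1$ together with $\sqrt5 = 2\phi-1$, both recommended to the reader earlier), giving $\Omega_{1,1}(n) = \phi^{-n}\cdot\phi = \phi^{1-n} = \Phi(n)$ for every $n\in\C$, as claimed.

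I expect the main obstacle to be discipline around the multivalued symbol $(-1)^n$ rather than the algebra itself: the cancelled term is in fact $\exp(\iu\pi n)\phi^n = (-\phi)^n$, i.e.\ one of the two homogeneous solutions of the recurrence $f(n)=f(n-2)-f(n-1)$, and the whole theorem amounts to the statement that $\Omega_{1,1}$ carries none of this $(-\phi)^n$ component, leaving only the pure $\phi^{1-n}$ solution. (One could instead try to argue abstractly that $\Phi$ is a complex complete nFe-sequence — indeed $\Phi(n-2)-\Phi(n-1)=\phi^{3-n}-\phi^{2-n}=\phi^{-n}\phi^2(\phi-1)=\phi^{1-n}=\Phi(n)$ since $\phi-1=1/\phi$ — and that it agrees with $\Omega_{1,1}$ at $n=1,2$; but the defining functional equation alone does not pin a complex complete sequence down from two values, so I prefer the direct simplification above.)
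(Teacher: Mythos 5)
Your proposal is correct and follows essentially the same route as the paper's own proof: substitute $\eta=\gamma=1$ into the Binet-type closed form, cancel the oscillatory $(-\phi)^n$ contributions from the two summands, and reduce everything to the identity $1+\phi^2=\sqrt5\,\phi$. Your explicit handling of $(-1)^n$ as $\exp(\iu\pi n)$ throughout is in fact slightly more careful than the paper's manipulation (which divides by $(-1)^n$ and treats $(-\phi)^{-n}$ somewhat loosely), but the underlying computation is the same.
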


\begin{proof}
        As this is the principal sequence, the nFe-coefficients are $\eta = \gamma = 1$:
        \begin{equation}
                I(n - 1) \cdot \Theta(n) + I(n) \cdot \Theta(n - 1) \cdot \phi
        \end{equation}
        Upon expansion:
        \begin{equation}
                (-1)^{n - 1} \cdot \frac{\phi^n - (-\phi)^{-n}}{\sqrt{5}} + (-1)^n \cdot \frac{\phi^{n - 1} - (-\phi)^{-n + 1}}{\sqrt{5}} \cdot \phi = \frac{1}{\phi^{n - 1}}
        \end{equation}
        Extend the right side with $\sqrt{5}$, then multiply by $\sqrt{5}$ and divide by $(-1)^n$, upon which the equation simplifies to:
        \begin{equation}
                -\phi^n + (-\phi)^{-n} + \phi^n - (-\phi)^{-n + 1} \cdot \phi = \frac{\sqrt{5}}{(-1)^n \cdot \phi^{n - 1}}
        \end{equation}
        Which, simplified, is equivalent to:
        \begin{equation}
                1 - (-\phi) \cdot \phi = \frac{\sqrt{5}}{(-1)^n \cdot \phi^{n - 1} \cdot (-\phi)^{-n}}
        \end{equation}
        Now, we just trivially need to expand all of these terms:
        \begin{equation}
                1 - (-\phi) \cdot \phi = \frac{\sqrt{5}}{(-1)^n \cdot \phi^n \cdot \phi^{-1} \cdot \phi^{-n} \cdot (-1)^{-n}} = \frac{\sqrt{5} \cdot \phi}{(-1)^0} = \sqrt{5} \cdot \phi
        \end{equation}
        The left side can be simplified even further:
        \begin{equation}
                1 + \phi^2 = \sqrt{5} \cdot \phi
        \end{equation}
        This is identity is completely trivial, yet we have included the full proof here for completeness sake. To start, simply divide by $\phi$:
        \begin{equation}
                \frac{1}{\phi} + \phi = \sqrt{5}
        \end{equation}
        Recall: $\frac{1}{\phi} = \phi - 1$. Thus:
        \begin{equation}
                2\phi - 1 = \sqrt{5}
        \end{equation}
        Which is trivial, considering $\phi = \frac{1 + \sqrt{5}}{2}$.
\end{proof}

\begin{corollary}
        The principal nFe-sequence, $\Phi_n = \Phi(n)$, $n \in \N$ is the corresponding natural complete nFe-sequence of the complex complete nFe-sequence $\Phi(n)$. $\Phi_n$ shall have the initial values:
        \[\omega_1 = 1\]
        \[\omega_2 = \phi - 1\] 
\end{corollary}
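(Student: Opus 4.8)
The plan is to read this off almost directly from the two theorems already established, since as a corollary it requires little genuinely new work. First I would invoke \cref{omegaeqphi}, which tells us that $\Phi(n) = \Omega_{1,1}(n)$ is a complex complete nFe-sequence. With this in hand, I would then apply the second half of \cref{implication}: every complex complete nFe-sequence admits exactly one corresponding natural complete nFe-sequence, obtained simply by restricting the domain from $\C$ to $\N$. Applying this to $\Phi(n)$ yields that $\Phi_n := \Phi(n)$, $n \in \N$, is the unique corresponding natural complete nFe-sequence, which is precisely the assertion of the first sentence of the statement and matches the notion of ``corresponding natural complete nFe-sequence'' from \cref{comseq}.

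It then remains to verify the two claimed initial values by direct substitution into the delegate $\Phi(n) = \frac{1}{\phi^{n-1}}$ of \cref{principal}. For $n = 1$ one computes $\Phi(1) = \frac{1}{\phi^{0}} = 1 = \omega_1$. For $n = 2$ one computes $\Phi(2) = \frac{1}{\phi^{1}} = \frac{1}{\phi}$, and invoking the standard identity $\frac{1}{\phi} = \phi - 1$ gives $\Phi(2) = \phi - 1 = \omega_2$, as required. These are exactly the initial values recorded in \cref{principal}, so everything is internally consistent.

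I do not anticipate any genuine obstacle here; the content is entirely a repackaging of \cref{omegaeqphi} and \cref{implication}. The only point deserving care is the indexing convention flagged in \cref{principal}: the underlying object $\frac{1}{\phi^n}$ is naturally $0$-indexed, whereas nFe-sequences are $1$-indexed, so the delegate is shifted to $\Phi(n) = \frac{1}{\phi^{n-1}}$. Keeping this shift straight is what makes the two initial-value computations land on $1$ and $\phi - 1$ rather than on $\phi$ and $1$; getting the offset wrong is the one way this otherwise routine argument could go astray.
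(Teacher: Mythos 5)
Your proposal is correct and follows essentially the same route as the paper: invoke \cref{omegaeqphi} for complex completeness, then \cref{implication} for the existence and uniqueness of the corresponding natural complete nFe-sequence, then check the two initial values. The only (immaterial) difference is that you verify $\omega_1, \omega_2$ by substituting $n=1,2$ directly into the delegate $\frac{1}{\phi^{n-1}}$, whereas the paper recovers them by inverting the nFe-coefficient formulas of \cref{coeffs} with $\eta=\gamma=1$; both yield $1$ and $\phi-1$.
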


\begin{proof}
        $\Phi(n)$, as defined in \cref{principal}, is complex complete according to \cref{omegaeqphi}. Any complex complete sequence has, as per \cref{implication}, a corresponding natural complete nFe-sequence whose initial values, $\tilde{\omega}_1, \tilde{\omega}_2$, are:
        \begin{equation}
                \tilde{\omega}_1 = \eta = 1
        \end{equation}
        and:
        \begin{equation}
                \tilde{\omega}_2 = \phi \cdot \gamma - \tilde{\omega_1} = \phi - 1
        \end{equation}
        As:
        \begin{equation}
                \tilde{\omega}_1 = \omega_1 \land \tilde{\omega_2} = \omega_2
        \end{equation}
        And as a natural complete nFe-sequence is only determined by its two initial values, it is proven.
\end{proof}

\section{Connections to the Golden Spiral and Ratio}
\begin{figure}[h]
        \includegraphics[width=\textwidth/2]{./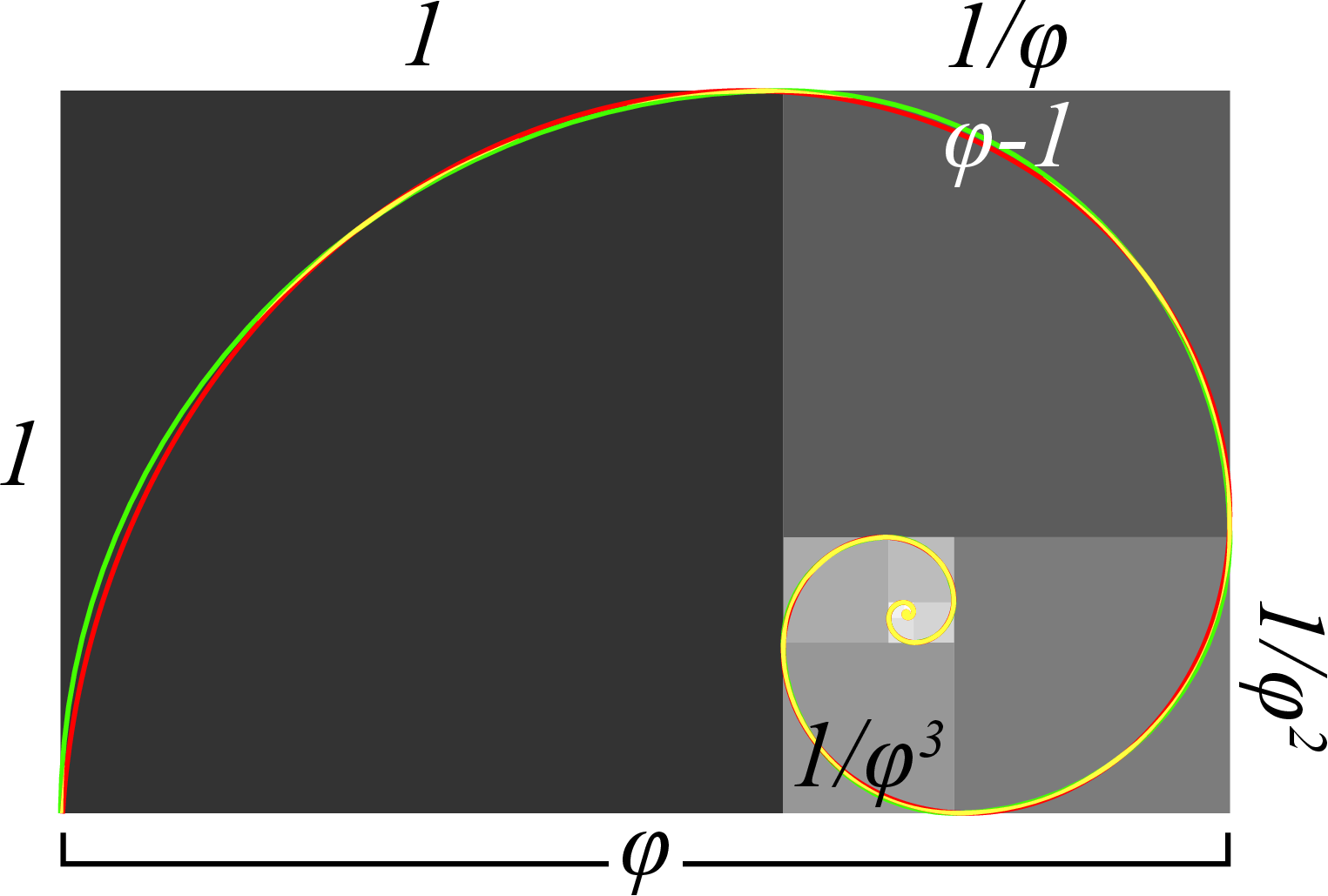}
        \centering
        \caption[Visualization of the golden spiral]{The above image visualizes the golden spiral and its growth factor \parencite{FakeRealLogSpiral}.}
        \label{spiralimg}
\end{figure}
\leavevmode\newline
When looking at the golden spiral in its segmented form, as visualized by \cref{spiralimg}, one quickly sees that the length of each segment is given by $\frac{1}{\phi^{n - 1}}$.\\
\begin{definition}
        We shall split the entirety of the golden spiral into so-called segments, where the n'th segment ($n \geq 1$) shall have a length of $\frac{1}{\phi^{n - 1}}$
\end{definition}
Naturally, this is an equivalent statement to the fact that the golden spiral has an inwards growth factor of $\phi$.
\begin{corollary}
        For $n > 2$, the length of the n'th segment will be the length of the segment before the previous one subtracted by the length of the previous one.
\end{corollary}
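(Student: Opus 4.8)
The plan is to recognize that the stated geometric relation is nothing but the defining nFe-recurrence applied to the sequence of segment lengths. First I would record that, by \cref{principal}, the length of the $n$-th segment is exactly $\Phi(n) = \frac{1}{\phi^{n-1}}$. Under this identification the phrase ``the segment before the previous one'' refers to the $(n-2)$-th segment, of length $\Phi(n-2)$, while ``the previous one'' refers to the $(n-1)$-th segment, of length $\Phi(n-1)$. The assertion of the corollary therefore translates into the single algebraic identity
\begin{equation}
        \Phi(n) = \Phi(n-2) - \Phi(n-1).
\end{equation}

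Next I would invoke \cref{omegaeqphi}, which states that $\Phi(n) = \Omega_{1,1}(n)$ for all $n \in \C$ and that $\Phi$ is a complex complete nFe-sequence. By the very definition of a complex complete nFe-sequence (\cref{comseq}), this means the recurrence $\Phi(n) = \Phi(n-2) - \Phi(n-1)$ holds for every $n \in \C$, and in particular for every natural number. This immediately yields the desired identity, with no further computation required.

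Finally I would address the index restriction $n > 2$. The corollary speaks only of actual segments, which are defined for indices $\geq 1$; the constraint $n > 2$, i.e.\ $n \geq 3$, guarantees that the three segments involved, namely the $(n-2)$-th, the $(n-1)$-th and the $n$-th, all genuinely exist, so the statement is meaningful precisely on this range. Since the underlying identity has already been established for all complex $n$ via \cref{omegaeqphi}, restricting to $n > 2$ costs nothing.

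I do not anticipate any real obstacle here: the entire content of the corollary is the observation that the segment lengths coincide with the principal nFe-sequence, for which the recurrence was already proven. The only point demanding care is the bookkeeping of the segment indices --- making sure that the words ``previous'' and ``before the previous'' correspond to $n-1$ and $n-2$ respectively, and that the hypothesis $n > 2$ is exactly what is needed for all three segments to be defined.
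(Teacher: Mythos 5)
Your proposal is correct and follows the same route as the paper, which simply cites \cref{omegaeqphi}; you merely spell out the translation from segment lengths to the identity $\Phi(n) = \Phi(n-2) - \Phi(n-1)$ and the role of the restriction $n > 2$, which the paper leaves implicit.
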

\begin{proof}
        This follows directly from \cref{omegaeqphi}. There exists a graphic proof which is left to the reader as an exercise.
\end{proof}

\begin{corollary}\label{phithing}
        $\frac{1}{\phi^n} = \frac{1}{\phi^{n - 2}} - \frac{1}{\phi^{n - 1}}$
\end{corollary}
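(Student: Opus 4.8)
The plan is to reduce this identity to the fundamental relation $\phi^2 = \phi + 1$. First I would note that both sides are rational expressions in $\phi$, so I may clear all denominators by multiplying through by $\phi^n$. Under this operation the target equation $\frac{1}{\phi^n} = \frac{1}{\phi^{n-2}} - \frac{1}{\phi^{n-1}}$ collapses to the purely algebraic statement $1 = \phi^2 - \phi$, which no longer involves $n$ at all.

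Next I would dispatch this reduced identity using the golden-ratio relations already recorded in the definitions. Writing $\phi^2 - \phi = \phi(\phi - 1)$ and recalling $\frac{1}{\phi} = \phi - 1$, we get $\phi(\phi - 1) = \phi \cdot \frac{1}{\phi} = 1$, which closes the argument. Equivalently one may quote $\phi^2 = \phi + 1$ directly to obtain $\phi^2 - \phi = 1$ in a single step.

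There is, however, a cleaner route that leans on the machinery already built in this paper, and that is the one I would actually present. By \cref{omegaeqphi} the principal sequence $\Phi(n) = \frac{1}{\phi^{n-1}}$ is a complex complete nFe-sequence, so by \cref{comseq} it satisfies $\Phi(m) = \Phi(m-2) - \Phi(m-1)$ for every $m \in \C$. Substituting $m = n + 1$ turns this into $\frac{1}{\phi^n} = \frac{1}{\phi^{n-2}} - \frac{1}{\phi^{n-1}}$, which is exactly the claimed identity. Thus the corollary is nothing more than the defining recurrence of $\Phi$ read off at a shifted index.

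Since the statement is essentially a restatement of an already-established property, there is no real obstacle here; the only point requiring care is the index bookkeeping — one must match $\frac{1}{\phi^n}$ with $\Phi(n+1)$ rather than $\Phi(n)$, and apply the shift $m = n+1$ consistently across all three terms so that the exponents $n$, $n-1$, $n-2$ line up correctly.
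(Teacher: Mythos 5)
Your proposal is correct and, in the route you actually present, matches the paper's own proof exactly: the paper also derives this corollary directly from \cref{omegaeqphi}, i.e.\ from the fact that $\Phi(n)=\frac{1}{\phi^{n-1}}$ is a complex complete nFe-sequence satisfying the recurrence for all complex arguments, with the index shift you describe. Your first two paragraphs additionally supply a more elementary standalone verification via $\phi^2=\phi+1$, which the paper omits but which is a fine sanity check.
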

\begin{proof}
        This proof follows directly from \cref{omegaeqphi}.
\end{proof}

\begin{corollary}
        $\phi^{n} = \phi^{n + 2} - \phi^{n + 1}$
\end{corollary}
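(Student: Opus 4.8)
The plan is to obtain the identity directly from \cref{phithing}, which we may treat as already established. Starting from $\frac{1}{\phi^n} = \frac{1}{\phi^{n-2}} - \frac{1}{\phi^{n-1}}$ and rewriting each reciprocal power with a negative exponent gives $\phi^{-n} = \phi^{2-n} - \phi^{1-n}$. Performing the substitution $n \mapsto -n$ then sends the exponents $-n \mapsto n$, $2-n \mapsto 2+n$ and $1-n \mapsto 1+n$, yielding exactly $\phi^{n} = \phi^{n+2} - \phi^{n+1}$. This requires nothing beyond reindexing an already-proven corollary, so I expect no obstacle along this route.

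Alternatively, a fully self-contained derivation is available and is perhaps more transparent. First I would factor a common power out of the right-hand side to obtain $\phi^{n+2} - \phi^{n+1} = \phi^{n}\bigl(\phi^{2} - \phi\bigr)$. Then I would invoke the defining identity $\phi^{2} = \phi + 1$, already recalled among the elementary $\phi$-identities in the introduction, which collapses the bracketed factor to $(\phi+1) - \phi = 1$. Hence $\phi^{n}\bigl(\phi^{2} - \phi\bigr) = \phi^{n}$, closing the argument in a single line.

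The only point meriting a word of care is the range of the index $n$. Since $\phi > 0$, the power $\phi^{n}$ is unambiguously defined for every $n \in \C$ via $\phi^{n} = \exp(n \ln \phi)$, so pulling out the common factor $\phi^{n}$ and adding exponents is legitimate even for complex exponents, and the identity therefore holds on all of $\C$ rather than merely on $\N$ or $\Z$. There is no genuine difficulty here; the statement is an immediate corollary, and the main decision is simply whether to present it as a one-line reindexing of \cref{phithing} or as the equally short factoring argument via $\phi^{2} = \phi + 1$.
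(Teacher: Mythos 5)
Your first route is exactly the paper's proof: the paper also derives this identity directly from \cref{phithing} by reindexing, and your substitution $n \mapsto -n$ correctly fills in the one step the paper leaves implicit. Your alternative via $\phi^{2} = \phi + 1$ is equally valid and self-contained, but the primary argument is essentially identical to the paper's, so there is nothing further to add.
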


\begin{proof}
        This proof follows directly from \cref{phithing}.
\end{proof}

Furthermore, we can draw a connection between the $\Omega_{1,1}(n)$ function and Binet's formula for powers of $\phi$.
\begin{lemma}\label{binnet}
        Binet's formula for the powers of the golden ratio, $\phi^n = F_n \cdot \phi + F_{n - 1}$, can be derived from the $\Omega_{1,1}(n)$ function.
\end{lemma}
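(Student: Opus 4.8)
The plan is to leverage the two facts already established about the principal sequence. By \cref{omegaeqphi} we know that $\Omega_{1,1}(n) = \Phi(n) = \frac{1}{\phi^{n-1}}$ holds for every $n \in \C$, and the note following the definition of $\Omega_{\eta,\gamma}$ lets us write the same function in its negaFibonacci form. First I would set $\eta = \gamma = 1$ in the expansion $\Omega_{\eta,\gamma}(n) = \eta \cdot I(n-1)\cdot F_n + \gamma \cdot I(n) \cdot F_{n-1} \cdot \phi$, obtaining
\[\Omega_{1,1}(n) = (-1)^{n-1} F_n + (-1)^n F_{n-1}\cdot \phi.\]
Using the negaFibonacci identity $F_{-n} = (-1)^{n+1}F_n$ (together with the fact that $(-1)^{n-1} = (-1)^{n+1}$ under \cref{extension}), this collapses to the compact form $\Omega_{1,1}(n) = F_{-n} + F_{-n+1}\cdot \phi$, which is precisely the form recorded in the note.

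Next I would combine this with \cref{omegaeqphi} to record the single identity
\[\frac{1}{\phi^{n-1}} = F_{-n} + F_{-n+1}\cdot \phi, \qquad \forall n \in \C.\]
Since this holds for all complex $n$, the key move is the substitution $n \mapsto 1 - n$. On the left we obtain $\frac{1}{\phi^{(1-n)-1}} = \frac{1}{\phi^{-n}} = \phi^n$, while on the right the indices become $F_{-(1-n)} = F_{n-1}$ and $F_{-(1-n)+1} = F_n$. After this replacement and a reordering of the two summands, the identity reads $\phi^n = F_n\cdot \phi + F_{n-1}$, which is exactly Binet's formula for the powers of $\phi$.

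The computation is largely bookkeeping, so the only place requiring genuine care — and what I expect to be the main (if modest) obstacle — is justifying that the sign manipulations remain valid once $(-1)^n$ is read through the complex extension $I(n) = \exp(\iu\pi n)$ of \cref{extension}. In particular I would verify that $(-1)^{n-1}$ and $(-1)^{n+1}$ genuinely coincide as functions on $\C$ (both equal $-\exp(\iu\pi n)$), so that the passage to the negaFibonacci form is legitimate beyond the integers, and that the substitution $n \mapsto 1 - n$ respects the extended definition of $I$ so that the derived identity is valid for all $n \in \C$ rather than merely for integer exponents. Once these points are checked, the derivation is immediate and requires no further estimates.
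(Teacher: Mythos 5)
Your proposal is correct and follows essentially the same route as the paper: expand $\Omega_{1,1}(n)$, apply the negaFibonacci identity to reach $\Omega_{1,1}(n) = F_{-n} + F_{-n+1}\cdot\phi = \phi^{-n+1}$, and then reindex to obtain $\phi^n = F_n\cdot\phi + F_{n-1}$. The only difference is that you make explicit the substitution $n \mapsto 1-n$ and the check that $(-1)^{n-1} = (-1)^{n+1}$ under the complex extension, both of which the paper leaves implicit.
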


\begin{proof}
        We start with the definition:
        \begin{equation}
                \Omega_{1,1}(n) = (-1)^{n - 1} F_n + (-1)^n F_{n - 1} \phi
        \end{equation}
        We apply the property that:
        \begin{equation}
                F_{-n} = (-1)^{n + 1} \cdot F_n
        \end{equation}
        And finish the derivation:
        \begin{equation}
                \Omega_{1,1}(n) = F_{-n} + F_{-n + 1} \cdot \phi = \phi^{-n + 1}
        \end{equation}
        \begin{equation}
                \phi^n = F_n \cdot \phi + F_{n - 1}
        \end{equation}
\end{proof}

\medskip
\printbibliography

\noindent MSC2020: 11B39, 33C05

\end{document}